\newtheorem{theorem}{Theorem}[section]
\newtheorem{proposition}[theorem]{Proposition}
\newtheorem{lemma}[theorem]{Lemma}
\theoremstyle{definition}
\newtheorem{remark}[theorem]{Remark}
\newenvironment{customthm}[1]
{\innercustomthm}
{\endinnercustomthm}
\newsavebox{\fminipagebox}
\NewDocumentEnvironment{fminipage}{m O{\fboxsep}}
{\par\kern#2\noindent\begin{lrbox}{\fminipagebox}
		\begin{minipage}{#1}\ignorespaces}
		{\end{minipage}\end{lrbox}%
	\makebox[#1]{%
		\kern\dimexpr-\fboxsep-\fboxrule\relax
		\fbox{\usebox{\fminipagebox}}%
		\kern\dimexpr-\fboxsep-\fboxrule\relax
	}\par\kern#2
}
\newcommand{\vertices}{\mathcal{V}}
\newcommand{\edges}{\mathcal{E}}
\newcommand{\norm}[1]{\ensuremath{\| #1 \|}}
\newcommand{\Bnorm}[1]{\ensuremath{\Big \| #1 \Big \|}}
\newcommand{\real}{{\mathbb{R}}}
\newcommand{\realnonnegative}{{\mathbb{R}}_{\ge 0}}
\newcommand{\integerspositive}{\mathbb{Z}_{\geq 1}}
\newcommand{\abs}[1]{\ensuremath{\left\lvert{#1}\right\rvert}}
\newcommand{\abss}[1]{\ensuremath{|{#1}|}}
\newcommand{\setdef}[2]{\{#1 \; | \; #2\}}
\newcommand{\setdefbig}[2]{\big\{#1 \; | \; #2\big\}}
\newcommand{\eps}{\epsilon}
\DeclareMathOperator*{\argmin}{\operatorname{argmin}}
\newcommand{\until}[1]{\{1,\dots,#1\}}
\newcommand{\map}[3]{#1:#2 \rightarrow #3}
\newcommand{\setr}[1]{\{#1\}}
\newcommand{\BB}{\mathcal{B}}
\newcommand{\II}{\mathcal{I}}
\renewcommand{\SS}{\mathcal{S}}
\newcommand{\YY}{\mathcal{Y}}
\newcommand{\ZZ}{\mathcal{Z}}
\newcommand{\FF}{\mathcal{F}}
\newcommand{\GG}{\mathcal{G}}
\newcommand{\HH}{\mathcal{H}}
\newcommand{\NN}{\mathcal{N}}
\newcommand{\EE}{\mathcal{E}}
\newcommand{\MM}{\mathcal{M}}
\newcommand{\PP}{\mathcal{P}}
\newcommand{\TT}{\mathcal{T}}
\newcommand{\VV}{\mathcal{V}}
\newcommand{\XX}{\mathcal{X}}
\newcommand{\ones}{\mathbf{1}}
\newcommand{\Eb}{\mathbb{E}}
\newcommand{\Pb}{\mathbb{P}}
\newcommand{\Qb}{\mathbb{Q}}
\newcommand{\Hb}{\mathbb{H}}
\newcommand{\Xihat}{\widehat{\Xi}}
\newcommand{\xihat}{\widehat{\xi}}
\newcommand{\uhat}{\widehat{u}}
\newcommand{\vhat}{\widehat{v}}
\newcommand{\data}[1]{\xihat^{\,#1}}
\newcommand{\PPhat}{\widehat{\PP}}
\newcommand{\aset}{\PPhat_L}
\newcommand{\Pbhat}{\widehat{\Pb}}
\newcommand{\st}{\operatorname{subject \text{$\, \,$} to}}
\renewcommand{\st}{\operatorname{s.} \operatorname{t.}}
\renewcommand{\until}[1]{[#1]}
\newcommand{\nuhat}{\widehat{\nu}}
\newcommand{\vag}{v^{\texttt{ag}}}
\newcommand{\xag}{x^{\texttt{ag}}}
\newcommand{\sag}{s^{\texttt{ag}}}
\newcommand{\CVaR}{\operatorname{CVaR}}
\newcommand{\Jdro}{J^{\star}_{\mathrm{DRO}}}
\newcommand{\Jref}{J^{\star}_{\mathrm{ref}}}
\newcommand{\oprocendsymbol}{\hbox{$\bullet$}}
\newcommand{\oprocend}{\relax\ifmmode\else\unskip\hfill\fi\oprocendsymbol}
\newcommand{\longthmtitle}[1]{\mbox{}\textup{\textsl{(#1):}}}
\definecolor{new}{rgb}{0.55,0,0.55}
\title{Data-driven distributionally robust optimization over a network via distributed semi-infinite programming}
\author{Ashish Cherukuri, Alireza Zolanvari, Goran Banjac, and Ashish R. Hota%
\thanks{A. Cherukuri and A. Zolanvari are with the Engineering and Technology Institute of Groningen, University of Groningen, The Netherlands (\texttt{\{a.k.cherukuri,a.zolanvari\}@rug.nl}), G. Banjac is with the Automatic Control Laboratory, ETH Z\"{u}rich, Switzerland (\texttt{gbanjac@ethz.ch}), and A. R. Hota is with the Department of Electrical Engineering, Indian Institute of Technology, Kharagpur, India (\texttt{ahota@ee.iitkgp.ac.in}).}}
\begin{document}

\maketitle

\begin{abstract}
	This paper focuses on solving a data-driven distributionally robust optimization problem over a network of agents. The agents aim to minimize the worst-case expected cost computed over a Wasserstein ambiguity set that is centered at the empirical distribution. The samples of the uncertainty are distributed across the agents. Our approach consists of reformulating the problem as a semi-infinite program and then designing a distributed algorithm that solves a generic semi-infinite problem that has the same information structure as the reformulated problem. In particular, the decision variables consist of both local ones that agents are free to optimize over and global ones where they need to agree on. Our distributed algorithm is an iterative procedure that combines the notions of distributed ADMM and the cutting-surface method. We show that the iterates converge asymptotically to a solution of the distributionally robust problem to any pre-specified accuracy.  Simulations illustrate our results.
\end{abstract}

\vspace*{-1ex}

\section{Introduction}\label{sec:intro}

Various machine learning problems can be cast as a stochastic optimization problem where the decision-maker intends to minimize an expected cost (termed commonly as the loss function). To solve this problem, the decision-maker usually has access to samples of the uncertainty modeled in the problem. In several applications, this sample set can be small, corrupted by noise, and distributed across several processors/agents. To handle the first two challenges, recent works have proposed solving a suitably defined data-driven distributionally robust optimization (DRO) problem~\cite{PME-DK:18}. The third challenge, of distributed data, is tackled by designing distributed algorithms, where agents iteratively compute decisions using only local communication, see e.g.,~\cite{GN-IN-AC:20,AN-JL:18}. In this paper, we wish to address all the three challenges together. We study data-driven DRO problems in a network setting where the data regarding the uncertainty is spread across multiple agents. We then design a distributed algorithm that solves this problem.

\subsubsection{Literature review}
 
A recent addition to the class of data-driven methods for solving stochastic optimization problems is data-driven DRO, where the decision-maker minimizes the worst-case expected cost computed over an ambiguity set of distributions that are close to the empirical distribution defined using data. Such an approach ensures that the resulting decision has better out-of-sample performance. Popular choices for metrics that define this closeness are KL-divergence~\cite{ZH-LJH:13}, $\phi$-divergence~\cite{RJ-YG:16}, Prohorov
metric~\cite{EE-GI:06}, and the Wasserstein metric~\cite{PME-DK:18,RG-AJK:16-arXiv}. In this paper, we focus on the latter; a survey of main ideas related to this method can be found in~\cite{DK-PME-VAN-SSA:19}. 

Much of the DRO literature focuses on solution algorithms that are centralized. In this work, we instead look into the distributed setting, where the data regarding the samples are not stored at a central location, but are instead distributed over multiple agents. The same setup was recently studied in~\cite{AC-JC:20-tac}, where a distributed primal-dual algorithm was proposed that solves the DRO problem. However, this work considered strong assumptions on the objective function (concavity with respect to the uncertainty over an identified domain) that need not hold for several statistical learning problems. In this work, we consider general objective functions that are locally Lipschitz and convex in the decision variable. 

\subsubsection{Setup and contributions}
Our approach builds on~\cite{FL-SM:19}, where the DRO problem is reformulated as a semi-infinite program. Noting the particular form of constraints and the objective of this problem, we form a general semi-infinite program that captures the particular case of DRO. In particular, the objective function of the problem has both global and separable local parts, and the constraints depend on both local and global decision variables. The way samples are distributed over the network, each agent is able to compute a subset of semi-infinite constraints. For such a problem structure, we design the distributed cutting-surface ADMM algorithm that solves the problem to any pre-specified accuracy. The algorithm combines features from the distributed ADMM method and the cutting-surface algorithm. We adapt our general algorithm to the DRO problem and highlight the convergence properties. 

We note that our algorithm solves a general semi-infinite problem in a distributed manner. The work~\cite{Burger:2012} also studies this problem and proposes a cutting-plane based distributed algorithm to solve it. However, therein, agents need to share cuts of the feasibility set and each cut is defined by a vector of the size of the primal variable. In our DRO problem, the size of the primal variable is of the order of the data points and is at least of the order of the number of agents. Thus, to implement their algorithm, agents need to communicate in each round a variable having size of the order of agents. This is not scalable.  We also note here that to determine the cuts in \cite{Burger:2012}, one needs to solve a maximization problem (defining the semi-infinite constraint) exactly. Since this problem is nonconvex in our setup, we encounter another hurdle. We overcome these roadblocks by focusing on cutting-surface algorithm~\cite{FL-SM:19} and developing a distributed version of it.

Our work is related to the broad field of federated learning, where machine learning problems are solved in a decentralized (with the presence of a central computing node) or distributed manner~\cite{JK-HBM-DR-PR:16-arXiv}. A recent related work~\cite{AR-FF-RP-AJ:20-arXiv} in this area looks into linear distributional shifts in data and designs a decentralized algorithm to solve this problem. In contrast, we consider more general distributional shifts and our algorithm does not require a central computing agent.

\noindent
\textbf{Notation:} Let $\real$, $\realnonnegative$, and $\integerspositive$ denote the set of real, nonnegative real, and positive integer numbers. For $n \in \integerspositive$, we denote $\until{n} := \{1, \dots , n\}$. The $2$-norm in $\real^n$ is represented by  $\norm{\cdot}$. We let $\ones_n=(1,\ldots,1) \in \real^n$ be the vector of ones. The $N$-fold Cartesian product $\SS \times \dots \times \SS$ of a set $\SS$ is denoted as $\SS^N$. 

\section{Problem statement}\label{sec:problem}

Consider $n \in \integerspositive$ agents communicating over an undirected connected graph $\GG := (\vertices,\edges)$.  The set of vertices are enumerated as $\VV:=\until{n}$ and $\EE \subset \VV \times \VV$ represents the set of edges.  Each agent $i \in \until{n}$ can send and receive information from its neighbors $\NN_i := \setdef{j \in \VV}{(i,j) \in \edges}$.  Let $\map{f}{\real^d \times \real^m}{\real} \colon (x,\xi) \mapsto f(x,\xi)$ be a locally Lipschitz objective function. Assume that for any $\xi \in \real^m$, the map $x \mapsto f(x,\xi)$ is convex. We are interested in solving the following \emph{data-driven distributionally robust optimization (DRO)} problem in a distributed manner over the network 
\begin{align}\label{eq:dro}
	\Jdro := \inf_{x \in \XX} \sup_{\Qb \in \aset} \Eb_{\Qb} [ f(x,\xi)],
\end{align}
where $\XX \subset \real^d$ is a compact convex set, $\xi \sim \Qb$ is the random variable, $\Eb_{\Qb}[ \, \cdot \,]$ denotes expectation with respect to the distribution $\Qb$, and $\aset$ is the set of distributions supported over a compact set $\Xi \subset \real^m$. Assume that $f$ is bounded on $\XX \times \Xi$. We refer to $\aset$ as the ambiguity set and we define it using data and the Wasserstein metric. In particular, assume we have $L \in \integerspositive$ samples of the uncertainty $\xi$, collected in the set $\Xihat := \{\xihat^1, \xihat^2, \dots, \xihat^L\}$. These samples need not be drawn from any particular distribution, however if they are sampled in an i.i.d manner from an underlying distribution, then we have desirable properties on the optimizer of the DRO problem~\eqref{eq:dro}, as illustrated in~\cite{PME-DK:18}. Let $\Pbhat_L := \frac{1}{L} \sum_{\ell=1}^L \delta_{\xihat^\ell}$ be the empirical distribution, that is, the distribution defined as the summation of delta functions of equal mass placed on each of the samples. Then, the ambiguity set $\aset$ is given as
\begin{align*}
  \aset := \setdef{\Qb \in \MM}{d_W(\Qb,\Pbhat_L) \le \theta},
\end{align*}
where $\MM$ is the set of distributions supported on $\Xi \subset \real^m$, $d_W$ is the $1$-Wasserstein metric on the space of distributions, and $\theta > 0$ is the radius. Note that given two distributions $\Pb_1$ and $\Pb_2$ supported on $\Xi$, the $1$-Wasserstein distance between them (defined using the Euclidean norm) is given as
\begin{align*}
	d_W(\Pb_1,\Pb_2) := \min_{\Hb \in \HH(\Pb_1,\Pb_2)} \left\{ \int_{\Xi \times \Xi} \norm{\xi-\omega} \Hb(d \xi, d \omega) \right\},
\end{align*}
where $\HH(\Pb_1,\Pb_2)$ is the set of all distributions on $\Xi \times \Xi$ with marginals $\Pb_1$ and $\Pb_2$.

The objective of the paper is to design an algorithm for a group of agents to solve the DRO problem~\eqref{eq:dro} in a distributed manner. To this end, we assume that each agent only knows a certain number (at least one) of samples from the set $\Xihat$. Denoting the data available to agent $i$ by the set $\Xihat_i \subset \Xihat$, we assume that $\Xihat_i \cap \Xihat_j = \emptyset$ for all $i,j \in \until{n}$ and $\Xihat= \cup_{i=1}^n \Xihat_i$. Each agent knows the function $f$, the sets $\XX$ and $\Xi$, and the radius of the ambiguity set $\theta > 0$. The challenge comes from the fact that each agent only has limited access to the data.

\subsubsection{Motivating example}
We now briefly motivate our problem setup. Consider a statistical learning problem where we aim to minimize the expected value of a loss function 
\begin{align}\label{eq:loss}
	\min_{x \in \XX} \, \, \Eb_{\Pb} [f(x,\xi)],
\end{align}
where $x$ is the decision variable and $\xi$ is the uncertainty that stands for the input-output data. Roughly speaking, the value $f(x,\xi)$ encodes the ability of $x$ to predict the relationship between the input-output pair $\xi$. A lower value of $f(x,\xi)$ means higher prediction accuracy. Thus, one seeks to minimize the expected value of this uncertain function. Usually, the distribution $\Pb$ is unknown and instead a data set $\Xihat = \{\xihat^1,\dots,\xihat^L\}$ possibly sampled in an i.i.d manner from $\Pb$ is available. Then, the expected value in~\eqref{eq:loss} is replaced with the sample average and one seeks the solution of
\begin{align}\label{eq:average-loss}
	\min_{x \in \XX} \, \, \textstyle \frac{1}{L} \sum_{\ell=1}^L f(x,\xihat^\ell).
\end{align} 
Now examine the network setup as explained above where samples $\Xihat$ are distributed across agents in the network. Then, the minimization problem~\eqref{eq:average-loss} is equivalently written as the distributed optimization problem
\begin{align}\label{eq:distributed}
	\min_{x \in \XX} \, \, \textstyle \sum_{i = 1}^n \varphi_i(x),
\end{align} 
where $\varphi_i$ is known to agent $i$ and it takes the form $\varphi_i(x) = \frac{1}{L} \sum_{\setdef{\ell \in [L]}{\xihat^\ell \in \Xihat_i}} f(x,\xihat^\ell)$. This formulation fits naturally into the traditional distributed optimization setup where agents across the network seek to minimize the sum of cost functions~\cite{GN-IN-AC:20,AN-JL:18}. While the solution of the sample average problem~\eqref{eq:distributed} has desirable statistical guarantees, such as out-of-sample performance, when the dataset is large and is sampled from $\Pb$, it often fails to demonstrate such properties when the dataset is small or encounters distributional shifts. The solution of the DRO problem~\eqref{eq:dro} fixes these issues. Thus, one would like to solve~\eqref{eq:dro} in a distributed manner. Unfortunately, the problem~\eqref{eq:dro} cannot be written equivalently in a similar form as~\eqref{eq:distributed}, and so we need to rethink the design of a distributed algorithm. This motivates our current work.

\subsubsection{Semi-infinite reformulation and cutting-surface method}\label{sec:sip-reform}
Note that~\eqref{eq:dro} consists of an infinite-dimensional inner optimization problem. Following~\cite{FL-SM:19}, we reformulate this as a semi-infinite program. Then, problem~\eqref{eq:dro} can be solved equivalently by finding a solution of
\begin{equation}\label{eq:dro-sip}
  \Jref := \left\{ \begin{array}{cl}
	  \underset{x,s,v}{\min} & \displaystyle \ones_L^\top v + L\theta s 
	  \\
	  \st & v \in \real^{L}, \quad s \in \realnonnegative, \quad x \in \XX, 
    \\
    & f(x,\xi) - v_\ell - s \norm{\xi - \data{\ell}} \le 0, 
    \\
    & \quad \quad  \quad \quad  \quad \quad \quad \forall \xi \in \Xi,  \;\; \ell \in \until{L},
  \end{array} \right.
\end{equation}
where $v_\ell$ represents the $\ell$-th component of the vector $v$ and we recall that $\ones_L$ is the vector of ones. The equivalence here means that $x$ is a solution of~\eqref{eq:dro} if and only if there exists $(v,s)$ such that $(x,v,s)$ is a solution of \eqref{eq:dro-sip}. However, it is important to note that the optimal value of~\eqref{eq:dro-sip} is $L$ times that of~\eqref{eq:dro}. That is, $\Jref = L \Jdro$. The decision variable of the above problem has size proportional to the number of samples. Further, we have $L$ semi-infinite constraints.

The above optimization problem is a particular case of a general convex semi-infinite program that can be written as 
	\begin{align}\label{eq:gen-sip-lit}
		\min_{x \in \XX} \, \, \setdef{\varphi(x)}{g(x,\xi) \le 0, \text{ for all } \xi \in \Xi},
	\end{align}
	where $\XX \subset \real^d$ and $\Xi \subset \real^m$ are compact, $\XX$ is convex, $\map{\varphi}{\real^d}{\real}$ is convex, $g$ is continuous and bounded on $\XX \times \Xi$, and $\map{g(\cdot,\xi)}{\real^d}{\real}$ is convex for every $\xi \in \Xi$. Semi-infinite programs are in general computationally challenging to solve, see the survey~\cite{RH-KOK-93}. We specifically focus our attention on the cutting-surface algorithm given in~\cite{FL-SM:19} as it naturally allows a distributed implementation. The cutting-surface algorithm consists of the following steps:
\begin{enumerate}
	\item Start with $x^0 \in \XX$ and a set $\Xi^0 = \emptyset$ that maintains cuts. A cut is a point in the uncertainty set $\Xi$. 
	\item At iteration $k$, given a finite set of cuts $\Xi^k \subset \Xi$, solve 
	\begin{align*}
		\min_{x \in \XX} \, \, \setdef{\varphi(x)}{g(x,\xi) \le 0, \text{ for all } \xi \in \Xi^k},
	\end{align*}
	and store the solution as $x^{k+1}$. 
	\item Find a new cut $\xi^{k+1}$ corresponding to $x^{k+1}$ that satisfies two properties. First, it is an $\eps/2$-optimal solution for the problem $\max_{\xi \in \Xi} g(x^{k+1},\xi)$. That is, 
	\begin{align*}
		g(x^{k+1},\xi^{k+1}) \ge \max_{\xi \in \Xi} g(x^{k+1},\xi) - \frac{\eps}{2}.
	\end{align*}
	Second, $g(x^{k+1},\xi^{k+1}) > \eps/2$. Note that a cut $\xi^{k+1}$ satisfying the above two properties exists as long as $x^{k+1}$ violates the robust constraint by at least $\eps$. That is, $\max_{\xi \in \Xi} g(x^{k+1},\xi) > \eps$. Once the cut $\xi^{k+1}$ is found, update the cut set as $\Xi^{k+1} = \Xi^{k} \cup \{\xi^{k+1}\}$ and move to the next iteration $k+1$.
	\item If no such cut is found, then declare $x^{k+1}$ as the desired solution and terminate the procedure. 
\end{enumerate}
As established in~\cite{FL-SM:19}, this algorithm converges in a finite number of steps to an $\eps$-feasible solution $x^\star \in \XX$ satisfying $\varphi(x^\star) \le \varphi^\star$, where $\varphi^\star$ is the optimal value of~\eqref{eq:gen-sip-lit}. Here $\eps$-feasibility means that the violation of the robust constraint is no more than $\eps$, i.e., $\max_{\xi \in \Xi} g(x^\star,\xi) \le \eps$.  Our aim is to develop a distributed version of the above algorithm. 

In the following section, we design a distributed algorithm that solves a generic semi-infinite program, for which~\eqref{eq:dro-sip} is a special case. Subsequently, in Section~\ref{sec:dist-dro}, we adapt this distributed algorithm to solve~\eqref{eq:dro-sip}.

\section{Distributed cutting-surface ADMM for convex semi-infinite program}\label{subsec:generic}

In this section, we provide a distributed algorithm to solve the following semi-infinite optimization problem 
\begin{equation}\label{eq:gen-sip}
  \begin{array}{cl}
	  \underset{y,\setr{z_i}_{i=1}^n}{\min} & \displaystyle \sum_{i=1}^n \left(\varphi_i(z_i) + h_i(y)\right) 
	  \\
	  \st & y \in \YY, 
	  \\
	  & z_i \in \ZZ_i, \quad \forall i \in \until{n},
	  \\
    & g_i(y,z_i,\xi) \le 0, \quad \forall \xi \in \Xi, \quad i \in \until{n}.
  \end{array}
\end{equation}
Here, functions $\{\varphi_i,h_i\}_{i=1}^n$ are convex, sets $\YY \subset \real^d$, $\ZZ_i \subset \real^{p_i}$, $i \in \until{n}$ are convex compact, and $\Xi \subset \real^m$ is compact. Further, $\varphi_i$ and $h_i$ are bounded on $\ZZ_i$ and $\YY$, respectively, and $g_i$ is locally Lipschitz and convex in $(y,z_i)$ for any fixed $\xi \in \Xi$. The problem~\eqref{eq:gen-sip} is convex under these conditions. Before we move on to the distributed algorithm, we comment about the information structure and the connection to the reformulated DRO problem~\eqref{eq:dro-sip}. We assume that each agent $i$ only knows $\varphi_i$, $h_i$, $g_i$, $\YY$, $\ZZ_i$, and $\Xi$.
The variable $z_i \in \real^{p_i}$ is a local decision variable for agent $i$ while $y \in \real^d$ is a global variable that all agents need to agree on. Note that the semi-infinite constraint for each agent $i$, $g_i(y,z_i,\xi) \le 0$ for all $\xi \in \Xi$, depends on both the local and the global variable. 
Comparing the two, one observes that problem~\eqref{eq:dro-sip} is a special case of~\eqref{eq:gen-sip} with decision variables $z_i=(v_\ell)_{\setdef{\ell}{\data{\ell} \in \Xihat_i}} \in \real^{\abss{\Xihat_i}}$, $y=(x,s) \in \real^{d+1}$, and objective functions $\varphi_i(z_i)=\ones_{\abss{\Xihat_i}}^\top z_i$, $h_i(y)= \abss{\Xihat_i} \theta s$. The parallelism between constraints is more involved and we will address this point in the subsequent section. 

Our distributed algorithm builds on a class of distributed ADMM algorithms where auxiliary variables are used for each edge of the network to impose consensus, see e.g.,~\cite{Banjac:2019,GM-JAB-GBG:10,AM-AO:17}. Next, we overview the derivation of such an algorithm. Let $\setr{y_i}_{i \in \until{n}}$ be local estimates of $y$ maintained by agents and let $\{u_{ij}\}_{(i,j) \in \EE}$ be the auxiliary variables that will ensure consensus over the global decision variable $y$. Using these variables, one can write~\eqref{eq:gen-sip} equivalently as 
\begin{equation}\label{eq:gen-sip-reform}
  \begin{array}{cl}
	  \underset{\substack{\setr{(y_i,z_i)}_{i=1}^n, \\ \setr{u_{ij}}_{(i,j) \in \EE}}}{\min} & \displaystyle \sum_{i=1}^n \left( \varphi_i(z_i) +  h_i(y_i) \right) 
	  \\
	  \st & y_i \in \YY, \quad \forall i \in \until{n}, 
	  \\
	& z_i \in \ZZ_i, \quad \forall i \in \until{n},
	\\
    & g_i(y_i,z_i,\xi) \le 0, \quad \forall \xi \in \Xi, \quad i \in \until{n},
    \\
    &  y_i = u_{ij}, \quad \forall i \in \until{n}, \, (i,j) \in \EE,
    \\
    & y_j = u_{ij}, \quad \forall j \in \until{n}, \, (i,j) \in \EE .
  \end{array}
\end{equation}
Note that constraints $y_i = u_{ij} = y_j$ in the above formulation impose consensus on $y$. Next, we derive a distributed ADMM algorithm using the reformulation~\eqref{eq:gen-sip-reform}. The derivation is given for completeness and can be found both in~\cite{Banjac:2019} and~\cite{GM-JAB-GBG:10}.  In contrast with the optimization problems considered in these works, the problem~\eqref{eq:gen-sip-reform} has semi-infinite constraints for each agent and we only require consensus over a subset of primal variables.  For now, we do not deal with the computational issues related to semi-infinite constraints. We will revisit this point once we have derived the distributed ADMM. 
The augmented Lagrangian corresponding to the problem~\eqref{eq:gen-sip-reform} is
\begin{align*}
	& L_\rho(Y, Z, U, \Lambda,\Gamma) 
	\\
	& \qquad := \sum_{i=1}^n \Bigl[ \varphi_i(z_i) + h_i(y_i)  +  \textstyle \sum_{j \in \NN_i} \bigl( \lambda_{ij}^\top (y_i - u_{ij}) 
		\\
& \qquad \tfrac{\rho}{2} \norm{y_i - u_{ij}}^2 + \gamma_{ij}^\top (y_j - u_{ij}) + \tfrac{\rho}{2} \norm{y_j - u_{ij}}^2 \bigr) \Bigr],
\end{align*}
where $\rho > 0$ is a parameter, $Y := (y_i)_{i\in\until{n}}$, $Z := (z_i)_{i\in\until{n}}$, and $U:=(u_{ij})_{(i,j) \in \EE}$ are the primal variables and $\Lambda := (\lambda_{ij})_{(i,j) \in \EE}$ and $\Gamma := (\gamma_{ij})_{(i,j) \in \EE}$ are the dual variables corresponding to the consensus constraints. Note that when forming the Lagrangian we have only dualized the consensus constraints. Denoting the feasibility set for each agent $i$ as
\begin{align*}
	\FF_i := \setdef{(y_i,z_i)}{y_i \in \YY, z_i \in \ZZ_i,  g_i(y_i,z_i,\xi) \le 0, \, \forall \xi \in \Xi},
\end{align*}
the distributed ADMM consists of the following updates
\begin{align*}
	(y_i^{k+1},z_i^{k+1}) & \gets \argmin_{(y_i,z_i) \in \FF_i}  \varphi_i(z_i) + h_i(y_i) 
	\\
	& \qquad + \sum_{j \in \NN_i} \bigl( (y_i^k)^\top (\lambda_{ij}^k + \gamma_{ij}^k) + \tfrac{\rho}{2} \norm{y_i - u_{ij}^k}^2 
	\\
	& \qquad \qquad + \tfrac{\rho}{2} \norm{y_i - u_{ji}^k}^2 \bigr), 
	\\
	u_{ij}^{k+1} &\gets \argmin_{u_{ij}} - u_{ij}^\top (\lambda_{ij}^k + \gamma_{ij}^k) + \tfrac{\rho}{2} \norm{u_{ij} - y_i^{k+1}}^2
	\\
	& \qquad + \tfrac{\rho}{2} \norm{u_{ij} - y_i^{k+1}}^2, 
	\\
	\lambda_{ij}^{k+1} &\gets \lambda_{ij}^k + \rho (y_i^{k+1} - u_{ij}^{k+1}),
	\\
	\gamma_{ij}^{k+1} & \gets \gamma_{ij}^k + \rho (y_j^{k+1} - u_{ij}^{k+1}).
\end{align*}
One could implement the above steps using only local communication. Nonetheless, with a particular initialization, some of the variables can be eliminated and the required communication and computation at each iteration can be reduced. Specifically, we do the following (see~\cite{Banjac:2019} and~\cite{GM-JAB-GBG:10} for details): (a) write the explicit expression for $u_{ij}^{k+1}$ using the first-order condition of optimality, (b) use this expression in the summation of $\lambda_{ij}^{k+1}$ and $\gamma_{ij}^{k+1}$ over the entire network to get $\lambda_{ij}^{k+1} + \gamma_{ij}^{k+1} = 0$, (c) initialize $\lambda_{ij}^0 = \gamma_{ij}^0 = 0$ and use the equality derived in (b) to define an update scheme for a new variable $p_i^k := 2 \sum_{j \in \NN_i} \lambda_{ij}^k$, and (d) rewrite the primal update $(z_i^{k+1},y_i^{k+1})$ using the variable $p_i^k$. This simplification reduces the ADMM into the update scheme
\begin{subequations}\label{eq:ADMM-dist-concise}
	\begin{align}
		p_i^{k+1} & \textstyle \gets p_i^k + \rho \sum_{j \in \NN_i} (y_i^{k} - y_j^{k}), \label{eq:p-update} 
		\\
		(y_i^{k+1},z_i^{k+1}) & \gets \argmin_{(y_i,z_i) \in \FF_i} \varphi_i(z_i) + h_i(y_i) + y_i^\top p_i^{k+1}  \notag
		\\
		& \textstyle \qquad \qquad + \rho \sum_{j \in \NN_i} \Bnorm{y_i - \tfrac{y_i^k + y_j^k}{2}}^2. \label{eq:primal-update}
	\end{align}
\end{subequations}
The above scheme can be implemented in a distributed manner where each agent $i$ at iteration $k$, broadcasts to its neighbors the variable $y_i^k$ and updates $(p_i^k,y_i^k,z_i^k)$. However, this scheme requires solving a semi-infinite program in~\eqref{eq:primal-update} at each iteration due to the definition of $\FF_i$. To overcome this limitation, we present the distributed cutting-surface ADMM method, formally given in Algorithm~\ref{alg:dist}, which combines the features of distributed ADMM and the cutting-surface algorithm explained in Section~\ref{sec:problem}.

\begin{quote}
\emph{[Informal description of Algorithm \ref{alg:dist}]:} In each iteration $k$ of this procedure, each agent $i$ executes an ADMM step (Lines~\ref{step:p-update} and~\ref{step:zy-update}). Different from~\eqref{eq:ADMM-dist-concise}, the feasibility set $\FF_i^k$ only imposes a finite number of constraints on the primal variables $(y,z)$, see  Line~\ref{step:F-update}. Each such constraint is defined by a point in the set $\Xi_i^k$ and is referred to as a cut. Line~\ref{step:cut-gen} finds an $\eps/2$-approximate maximizer $\xi_i^{k+1}$ of the constraint at the primal optimizer $(y_i^{k+1},z_i^{k+1})$ that was obtained in the ADMM step. If the constraint value at $(y_i^{k+1},z_i^{k+1},\xi_i^{k+1})$ is higher than $\eps/2$, then $\xi_i^{k+1}$ is added as a valid cut to the cut set $\Xi_i^k$. Line~\ref{step:F-update} updates the feasibility set. 
\end{quote}

In our distributed algorithm, agents form better outer approximations of the semi-infinite constraint set as the iterations progress. Note that as explained in the cutting-surface algorithm, we seek only an $\eps/2$-approximate maximizer for determining the cut as this maximization problem is nonconvex in general. The following result provides the convergence guarantees of Algorithm~\ref{alg:dist}. 

\begin{algorithm}[htb]
  \SetAlgoLined
  \DontPrintSemicolon
  \SetKwFor{Case}{case}{}{endcase}
  \SetKwInOut{giv}{Data} \SetKwInOut{ini}{Initialize}
  \SetKwInOut{state}{State} \SetKwInput{start}{Initiate}
  \SetKwInOut{msg}{Messages} \SetKwInput{Kw}{Executed by}
  \Kw{agents $i \in \until{n}$}
  \giv{agent $i$ has access to functions $\varphi_i$, $h_i$, $g_i$, sets $\YY$, $\Xi$, regularization parameter $\rho > 0$, and error tolerance $\eps >0$}
  \ini{agent $i$ sets iteration count $k=0$, variables $p_i^0 = 0$, $z_i^0 \in \ZZ_i$ $y_i^0 \in \YY$, cut set $\Xi_i^0 = \emptyset$, feasibility set $\FF_i^0 = \ZZ_i \times \YY$}
  \BlankLine \Repeat{termination condition is met}{ \tcc{Each agent $i$:} 
	  Exchange $y_i^k$ with neighbors $\NN_i$ \label{step1} \;  
	  Set $p_i^{k+1} \gets p_i^k + \rho \sum_{j \in \NN_i} (y_i^k - y_j^k)$ \label{step:p-update}  \; 
	  Set 
	  \vspace*{-0.5ex}
	  \begin{multline*} 
		  (y_i^{k+1},z_i^{k+1})  \gets \argmin_{(y_i,z_i) \in \FF_i^k}  \varphi_i(z_i) + h_i(y_i) 
		  \\
		   \quad + y_i^\top p_i^{k+1} + \rho \sum_{j \in \NN_i} \Bnorm{y_i - \tfrac{y_i^k + y_j^k}{2}}^2 
	  \end{multline*} 
	  \vspace*{-2.5ex}
		 \label{step:zy-update} \; 
	  Determine $\xi_i^{k+1} \in \Xi$ such that 
	  \vspace*{-0.5ex}
		  $$g_i(y_i^{k+1} ,z_i^{k+1},\xi_i^{k+1}) \ge \max_{\xi \in \Xi} \! g_i(y_i^{k+1},z_i^{k+1},\xi)  -  \frac{\eps}{2}$$
	  \vspace*{-2ex}
	  \label{step:cut-gen} \;
	  \eIf{$ g_i(y_i^{k+1},z_i^{k+1},\xi_i^{k+1}) >  \frac{\eps}{2}$}{Set $\Xi_i^{k+1}  \gets \Xi_i^k \cup \{\xi_i^{k+1}\}$ \label{step5}}{Set $\Xi_i^{k+1} \gets \Xi_i^k$ } \label{step:cut-gen-plus}
	  Set 
	  \vspace*{-1.5ex}
	  \begin{multline*}
		  \FF_i^{k+1} \gets \setdefbig{(z,y)}{z \in \ZZ_i, y \in \YY,  
	  \\
  g_i(y,z,\xi) \le 0, \, \forall \xi \in \Xi_i^{k+1}}
	\end{multline*}
	\vspace*{-2.5ex}
	\label{step:F-update} \; 
	Set $k \gets k+1$ \;}
  \caption{Distributed cutting-surface ADMM}\label{alg:dist}
\end{algorithm}

\begin{proposition}\longthmtitle{Convergence of distributed cutting-surface ADMM algorithm}\label{pr:conv}
	For Algorithm~\ref{alg:dist}, there exists a finite time $K \in \integerspositive$ such that $\Xi_i^{k_1} = \Xi_i^{k_2}$ for all $i \in \until{n}$ and all $k_1, k_2 \ge K$. Moreover, any limit point $(\bar{Y},\bar{Z}) :=(\bar{y}_i,\bar{z}_i)_{i \in \until{n}}$ of the sequence $\{(Y^k,Z^k):=(y_i^{k},z_i^{k})_{i \in \until{n}}\}_{k=0}^\infty$ satisfies $\bar{y}_i = \bar{y}_j$ for all $i,j \in \until{n}$, 
	\begin{align}\label{eq:termination-feas}
		\max_{i \in \until{n}} \, \max_{\xi \in \Xi} \, g_i(\bar{y}_i,\bar{z}_i,\xi) \le \eps,
	\end{align}
	and 
	\begin{align}\label{eq:final-opt}
	\sum_{i =1}^n \left( \varphi_i(\bar{z}_i) + h_i(\bar{y}_i) \right) \le J^\star,
	\end{align}
	where $J^\star$ is the optimum value of~\eqref{eq:gen-sip}.
\end{proposition}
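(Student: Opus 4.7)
My plan is to decompose the argument into two pieces: (a) finite stabilization of every cut set $\Xi_i^k$, after which Algorithm~\ref{alg:dist} reduces to a standard distributed ADMM recursion on a finite-dimensional convex program; and (b) translation of the cut-stabilization property together with the ADMM limit behaviour into the three claims on consensus, $\eps$-feasibility, and optimality.

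For the finite stabilization, I would argue by contradiction. Suppose some agent $i$ adds infinitely many cuts, say at iterations $k_j$, so $\xi_i^{k_j+1}$ is placed in $\Xi_i^{k_j+1}$. By compactness of $\Xi$ and of $\YY \times \ZZ_i$, I can extract a subsequence along which $\xi_i^{k_j+1} \to \xi^\star$ and $(y_i^{k_j+1}, z_i^{k_j+1}) \to (\hat y_i, \hat z_i)$. Because the cut was added, Line~\ref{step:cut-gen-plus} yields $g_i(y_i^{k_j+1}, z_i^{k_j+1}, \xi_i^{k_j+1}) > \eps/2$ for every $j$. For any $j' > j$, however, $\xi_i^{k_j+1} \in \Xi_i^{k_{j'}}$, so $(y_i^{k_{j'}+1}, z_i^{k_{j'}+1}) \in \FF_i^{k_{j'}}$ forces $g_i(y_i^{k_{j'}+1}, z_i^{k_{j'}+1}, \xi_i^{k_j+1}) \le 0$. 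Sending $j' \to \infty$ with $j$ fixed and using continuity of $g_i$ gives $g_i(\hat y_i, \hat z_i, \xi_i^{k_j+1}) \le 0$, and then sending $j \to \infty$ gives $g_i(\hat y_i, \hat z_i, \xi^\star) \le 0$, contradicting the lower bound $\eps/2$ obtained from the cut-insertion inequality. Hence every $\Xi_i^k$ stabilizes, and taking $K$ to be the maximum of the stabilization indices over the finitely many agents gives $\Xi_i^{k} = \Xi_i^K$ for all $k \ge K$ and all $i$.

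With $K$ in hand, from iteration $K$ onward each $\FF_i^k$ is frozen at a fixed convex compact set $\FF_i^K$ that contains the true feasibility set $\FF_i$, and the iteration in Lines~\ref{step:p-update}--\ref{step:zy-update} coincides exactly with the distributed ADMM recursion \eqref{eq:ADMM-dist-concise} for the convex program of minimizing $\sum_i (\varphi_i(z_i) + h_i(y_i))$ over $(y_i, z_i) \in \FF_i^K$ subject to consensus $y_i = y_j$. Invoking the convergence of distributed ADMM from \cite{Banjac:2019,GM-JAB-GBG:10}, any limit point $(\bar Y, \bar Z)$ is consensual ($\bar y_i = \bar y_j$) and achieves the optimal value of this restricted problem. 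Since $\FF_i \subset \FF_i^K$, that restricted optimum is no larger than $J^\star$, which yields~\eqref{eq:final-opt}. For~\eqref{eq:termination-feas}, observe that for every $k \ge K$ no cut is added, so by Line~\ref{step:cut-gen-plus} we have $g_i(y_i^{k+1}, z_i^{k+1}, \xi_i^{k+1}) \le \eps/2$; combined with the $\eps/2$-maximizer property of Line~\ref{step:cut-gen}, this gives $\max_{\xi \in \Xi} g_i(y_i^{k+1}, z_i^{k+1}, \xi) \le \eps$, and passing to the limit via continuity of $g_i$ delivers~\eqref{eq:termination-feas}.

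The main obstacle I anticipate is in the cut-stabilization step: one must ensure that the ADMM-style primal iterates remain well-defined and bounded throughout the outer cut-generation process, so that the convergent subsequence used in the contradiction argument is genuinely available, and one must verify that the changing nature of $\FF_i^k$ across iterations does not compromise that boundedness. A secondary, more technical point is checking that the cited distributed ADMM convergence result applies from iteration $K$ onward, where the primal and dual variables are not at their fresh initial values but have been updated under the varying $\FF_i^k$; since after $K$ the feasibility sets are fixed and the iterates lie in $\FF_i^K$, this should be a direct application, but the reduction deserves an explicit remark in the proof.
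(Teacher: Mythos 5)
Your proposal is correct and follows the same two-stage architecture as the paper's proof: (a) finite stabilization of the cut sets by a compactness contradiction, then (b) reduction to the distributed ADMM of~\cite{GM-JAB-GBG:10} on the frozen finite-constraint problem, with $\eps$-feasibility extracted from the failed cut-insertion test combined with the $\eps/2$-maximizer property of Line~\ref{step:cut-gen}, and \eqref{eq:final-opt} from $\FF_i \subset \FF_i^K$. The one place you genuinely deviate is step (a): the paper isolates this as Lemma~\ref{le:finite-conv} and argues that any two added cuts must be separated by at least $\eps/(4L)$ in $\Xi$ (using the uniform Lipschitz constant $L$ of $g_i$ on the compact domain), so infinitely many cuts would violate compactness of $\Xi$; you instead extract a jointly convergent subsequence of cuts and primal iterates and play the limits $g_i(\hat y_i,\hat z_i,\xi^\star)\le 0$ and $g_i(\hat y_i,\hat z_i,\xi^\star)\ge \eps/2$ against each other. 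Both are valid; the paper's packing argument is marginally stronger in that it yields, in principle, a covering-number bound on the total number of cuts, whereas yours is purely asymptotic but needs only continuity of $g_i$ rather than an explicit Lipschitz constant. Your two flagged concerns are benign for the same reason in both proofs: the iterates always live in the fixed compact set $\ZZ_i\times\YY \supset \FF_i^k \supset \FF_i$, so boundedness and well-posedness of the argmin in Line~\ref{step:zy-update} are automatic, and the warm-start issue for the ADMM phase (nonzero $p_i^K$ at iteration $K$) is one the paper itself passes over silently, so you are, if anything, more careful than the original on that point.
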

\begin{proof}
	We use Lemma~\ref{le:finite-conv} to show first the existence of $K \in \integerspositive$ such that for all $k_1, k_2 \ge K$, we have 
	\begin{align}\label{eq:K-exist}
		\max_{i \in \until{n}} \, \max_{\xi \in \Xi} \, g_i(y_i^k,z_i^k,\xi) \le \eps,
	\end{align}
	$\Xi_i^{k_1} = \Xi_i^{k_2}$ for all $i \in \until{n}$. That is, after a finite number of iterations, no cuts are added in Line~\ref{step5} for any agent. Drawing the parallelism between the notation of Lemma~\ref{le:finite-conv} and the algorithm, pick an agent $i$ and note that the sequence $\setr{y_i^k,z_i^k}$ plays the role of $\setr{x^k}$, the set $\YY \times \ZZ_i$ that of $\XX$, and $g_i$ stands for the function $g$. By assumption, $g_i$ is locally Lipschitz and convex in $(z_i,y_i)$. Due to Line~\ref{step:zy-update}, we have 
	\begin{align*}
		g_i(y_i^{k+1},z_i^{k+1},\bar{\xi}) \le 0, \quad \forall \, \bar{\xi} \in \Xi^k_i.
	\end{align*}
	Further, the cut addition steps, Line~\ref{step:cut-gen} to Line~\ref{step:cut-gen-plus}, are same as part~\ref{cond:2} of the hypothesis of Lemma~\ref{le:finite-conv}. Therefore, as a result of Lemma~\ref{le:finite-conv}, we obtain~\eqref{eq:K-exist} and the fact that $\Xi^{k_1}_i = \Xi^{k_2}_i$ for all $i \in \until{n}$ and all $k_1, k_2 \ge K$. Note that due to~\eqref{eq:K-exist} and continuity, any accumulation point of the sequence $\{(Y^k,Z^k)\}$ satisfies~\eqref{eq:termination-feas}. Now consider any $\bar{k} > K$. 
	Consider the optimization problem 
	\begin{equation}\label{eq:gen-sip-with-cut}
		\begin{array}{cl}
			\underset{y,\setr{z_i}_{i=1}^n}{\min} & \displaystyle \sum_{i=1}^n \varphi_i(z_i) + h_i(y) 
			\\
			\st & y \in \YY, 
			\\
			& z_i \in \ZZ_i, \quad \forall i \in \until{n},
			\\ 
			& g_i(y,z_i,\xi) \le 0, \quad \forall \xi \in \Xi^{\bar{k}}_i, \quad i \in \until{n},
		\end{array}
	\end{equation}
	where the difference between this problem and~\eqref{eq:gen-sip} is that the semi-infinite constraint in~\eqref{eq:gen-sip} is replaced with a finite number of constraints in~\eqref{eq:gen-sip-with-cut}. After the $K$-th iteration, since the constraint set does not change, the algorithm reduces to the distributed ADMM algorithm that solves~\eqref{eq:gen-sip-with-cut} with consensus over the global variable $y$. Thus, from~\cite[Proposition 2]{GM-JAB-GBG:10}, for any limit point $(\bar{Y},\bar{Z}) =  (\bar{y}_i,\bar{z}_i)_{i \in \until{n}}$, we have consensus $\bar{y}_i = \bar{y}_j$ and $(\bar{y}_i,\bar{Z})$ is a solution of~\eqref{eq:gen-sip-with-cut}. Since the constraint set of~\eqref{eq:gen-sip} is a subset of the feasibility set of~\eqref{eq:gen-sip-with-cut} as $\Xi^{\bar{k}}_i \subset \Xi$ for all $i$, we conclude that~\eqref{eq:final-opt} holds.
\end{proof}

The above result shows that any accumulation point of the iterates maintained by the agents satisfies three properties. First, they achieve consensus over the global variable. Second, the semi-infinite constraint is satisfied with $\eps$ accuracy, and third, the objective value at the network-wide converged value is no more than the optimal value of the semi-infinite problem~\eqref{eq:gen-sip}. Thus, one can claim that~\eqref{eq:gen-sip} is solved up to an $\eps$ accuracy. Note that such an optimizer could be found in finite number of steps in the centralized cutting-surface method explained in Section~\ref{sec:sip-reform}. The convergence is asymptotic here due to the distributed nature of our algorithm.

\begin{remark}\longthmtitle{Generalizations of Algorithm~\ref{alg:dist}}
	In Algorithm~\ref{alg:dist}, the cut addition step can be skipped in some iterations by every agent. As long as one cut is added in every few iterations, the convergence still holds. On the same token, the guarantees are not affected if more than one cut is added at each iteration.  
	Furthermore, one need not start adding the cuts with $\eps$ tolerances. Initially, the tolerance can be high so that agents quickly reach consensus. Subsequently, agents can set the tolerance at a lower value to improve the accuracy of constraint set. Such modifications can possibly be used to improve the rate of convergence.  
	\oprocend
\end{remark}

\begin{remark}\longthmtitle{Comparison with literature}\label{re:comparison}
	The work~\cite{AC-JC:20-tac} gives a distributed saddle-point algorithm to solve the DRO problem~\eqref{eq:dro}. While the network structure and the availability of samples is same as our case, our work considerably generalizes the problem setup. Firstly, the method in~\cite{AC-JC:20-tac} relies on identifying a subset of $\XX \times \Xi$ where the function defining the semi-infinite constraint is concave in the uncertainty $\xi$. Our algorithm does not require such identification. Secondly, the objective function is assumed to be differentiable there while we can handle nonsmooth convex functions. Finally, we do not assume that agents have access to the number of other agents in the network or the total number samples.
	\oprocend
\end{remark}

\section{Distributed cutting-surface ADMM for DRO}\label{sec:dist-dro}

Here, we adapt the distributed algorithm presented in Algorithm~\ref{alg:dist} to the DRO problem~\eqref{eq:dro-sip}. To this end, we recall the comparison between~\eqref{eq:dro-sip} and~\eqref{eq:gen-sip}. In the former, $(x,s)$ are global variables on which all agents need to agree on and $\vag_i := (v_\ell)_{\setdef{\ell \in \until{L}}{\data{\ell} \in \Xihat_i}}$ is the local variable. In words, $\vag_i$ consists of decision variables $v_\ell$'s of problem~\eqref{eq:dro-sip} for which the data point $\data{\ell}$ is held by agent $i$. In problem~\eqref{eq:gen-sip}, $y$ is the global variable and $z_i$'s are the local ones. Functions $\varphi_i$ and $h_i$ in~\eqref{eq:gen-sip} can then be analogously written for~\eqref{eq:dro-sip} as 
\begin{align}\label{eq:varphi-dro}
	\vag_i \mapsto \ones_{\abss{\Xihat_i}}^\top \vag_i \quad \text{ and } \quad (x,s) \mapsto \abs{\Xihat_i} \theta s,
\end{align}
respectively. The constraint function $g_i$ for~\eqref{eq:dro-sip} reads as 
\begin{align}\label{eq:gi-dro}
	(x,s,\vag_i,\xi) \mapsto \max_{\setdef{\ell \in \until{L}}{\data{\ell} \in \Xihat_i}} f(x,\xi) - (\vag_i)_\ell - s \norm{\xi - \data{\ell}},
\end{align}
where $(\vag_i)_\ell$ refers to the component of the vector $\vag_i$ corresponding to the sample $\data{\ell}$. Finally, note that in problem~\eqref{eq:gen-sip} we assume that the global and local variables, for each agent $i$, are restricted to convex compact sets $\YY$ and $\ZZ_i$, respectively. However, while $x$ belongs to the convex compact set $\XX$ in~\eqref{eq:dro-sip}, we do not have such a constraint for variables $s$ and $v$. Thus, to use the results of the previous section, we find next a compact convex domain for~\eqref{eq:dro-sip} that does not disturb the optimizers. 

\begin{lemma}\longthmtitle{Compact domain DRO problem~\eqref{eq:dro-sip}}\label{le:compact}
	Any optimizer $(x^\star,s^\star,v^\star)$ of the DRO problem given in~\eqref{eq:dro-sip} belongs to the convex compact set $\XX \times \BB_s \times \BB_v$, where 
	\begin{align*}
		\BB_s := \left[ 0,\tfrac{\overline{f} - \underline{f}}{\theta} \right] \quad \text{ and }  \quad \BB_v = \II^L,
	\end{align*}
	with interval $\II := [\underline{f}, \underline{f} + L (\overline{f} - \underline{f})]$ and $\underline{f}$ and $\overline{f}$ are bounds such that $\underline{f} \le f(x,\xi) \le \overline{f}$ for all $(x,\xi) \in \XX \times \Xi$.
\end{lemma}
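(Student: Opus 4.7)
The plan is to exhibit a simple feasible point of problem~\eqref{eq:dro-sip} to get an a~priori upper bound on the optimal value, and then use that bound together with the pointwise lower bound forced by the semi-infinite constraints (evaluated at the data points themselves) to squeeze $s^\star$ and each $v_\ell^\star$ into the claimed intervals. Since $\XX$ is already compact by assumption, nothing needs to be shown for the $x$-component.

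First, I would construct an upper bound on $\Jref$. Fix any $x\in\XX$, set $s=0$, and set $v_\ell=\overline{f}$ for every $\ell\in\until{L}$. For each $\ell$ and each $\xi\in\Xi$,
\begin{equation*}
f(x,\xi)-v_\ell-s\norm{\xi-\data{\ell}}\;=\;f(x,\xi)-\overline{f}\;\le\;0,
\end{equation*}
so $(x,s,v)$ is feasible. The induced objective value is $L\overline{f}$, hence $\Jref\le L\overline{f}$.

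Second, I would derive a lower bound on each component of $v^\star$. Evaluating the semi-infinite constraint at $\xi=\data{\ell}\in\Xi$ gives $v_\ell^\star\ge f(x^\star,\data{\ell})\ge \underline{f}$, so that $\ones_L^\top v^\star\ge L\underline{f}$. Combining with $\ones_L^\top v^\star+L\theta s^\star=\Jref\le L\overline{f}$ and $\theta>0$ yields
\begin{equation*}
0\;\le\;s^\star\;\le\;\frac{\overline{f}-\underline{f}}{\theta},
\end{equation*}
i.e.\ $s^\star\in\BB_s$. Finally, isolating a single coordinate $v_\ell^\star=\ones_L^\top v^\star-\sum_{k\ne\ell}v_k^\star\le L\overline{f}-L\theta s^\star-(L-1)\underline{f}\le L\overline{f}-(L-1)\underline{f}=\underline{f}+L(\overline{f}-\underline{f})$, together with $v_\ell^\star\ge\underline{f}$, gives $v_\ell^\star\in\II$ for each $\ell$, hence $v^\star\in\BB_v=\II^L$.

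No serious obstacle is anticipated: the argument is a two-sided sandwich using only the existence of the uniform bounds $\underline{f},\overline{f}$ guaranteed by boundedness of $f$ on $\XX\times\Xi$, together with the fact that $\data{\ell}\in\Xi$ makes the semi-infinite family include the trivial ``data-point'' constraint. The only mild care needed is to keep track of the sign of $\theta$ and to notice that the feasible point used in step one does not require any structure on $f$ beyond the uniform upper bound $\overline{f}$, so the bound $\Jref\le L\overline{f}$ holds unconditionally.
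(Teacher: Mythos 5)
Your proof is correct and follows essentially the same sandwich argument as the paper: evaluate the constraint at $\xi=\data{\ell}$ to get $v_\ell^\star\ge\underline{f}$, combine with the upper bound $\Jref\le L\overline{f}$ to bound $s^\star$ and each $v_\ell^\star$. The only (harmless) difference is how you obtain $\Jref\le L\overline{f}$: you exhibit the explicit feasible point $(x,0,\overline{f}\ones_L)$, which is slightly more self-contained than the paper's route via $\Jref=L\Jdro\le L\overline{f}$.
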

\begin{proof}
	Given $\Jref = L \Jdro$ where $\Jref$ and $\Jdro$ are optimal values of~\eqref{eq:dro-sip} and~\eqref{eq:dro}, respectively, we deduce that for any optimizer $(x^\star, s^\star, v^\star)$ of~\eqref{eq:dro-sip}, the following hods  
	\begin{align}\label{eq:opt-bound}
		\ones_{L}^\top v^\star + L \theta s^\star \le \Jref = L \Jdro \le L \overline{f}.
	\end{align}
	From feasibility of $(x^\star,x^\star,v^\star)$, we get for each $\ell \in \until{L}$,
	\begin{align*}
		f(x^\star,\xi) - v_\ell^\star - s^\star \norm{\xi - \data{\ell}} \le 0 \quad \forall \xi \in \Xi.
	\end{align*} 
	Setting $\xi = \data{\ell}$ in the above constraint gives us the inequality $v_\ell^\star \ge f(x^\star,\data{\ell}) \ge \underline{f}$. Further, using the feasibility condition $s^\star \ge 0$ in~\eqref{eq:opt-bound} yields $\sum_{\ell \in [L]} v_\ell^\star \le L \overline{f}$.	Using $v_\ell^\star \ge \underline{f}$ in this inequality gives the bound for each component of $v^\star$ as
	\begin{align*}
		v_\ell^\star & \le L \overline{f} - \sum_{k \in \until{L} \setminus \{\ell\}} v_k^\star
		\le L \overline{f} - (L-1) \underline{f}
	\end{align*}
	The above reasoning establishes that for any optimal $(x^\star,s^\star,v^\star)$, we have $v^\star \in \BB_v$. Finally, using the bound $v_\ell^\star \ge \underline{f}$ in~\eqref{eq:opt-bound} results in the upper bound $s^\star \le \frac{\overline{f} - \underline{f}}{\theta}$, thus establishing that $s^\star \in \BB_s$.
\end{proof}

Having described the parallelism between~\eqref{eq:dro-sip} and~\eqref{eq:gen-sip} and having identified the compact domains, we present Algorithm~\ref{alg:dist-dro} that solves the DRO problem.

\begin{center}
	\begin{fminipage}{0.47\textwidth}
		\begin{customthm}{2}\longthmtitle{Distributed cutting-surface ADMM for DRO}\label{alg:dist-dro}
			\rm{
				The procedure involves executing Algorithm~\ref{alg:dist} with the following elements: \\
				\textbf{Parameters:} regularization $\rho > 0$ and  tolerance $\eps >0$
				
				\textbf{Variables:} $(\xag_i,\sag_i) \in \real^{d+1}$ as $y_i$; $\vag_i \in \real^{\abss{\Xihat_i}}$ as $z_i$; and the auxiliary variable $p_i \in \real^{d+1}$ remains same 
				
			\textbf{Sets:} $\XX \times \BB_s$ as $\YY$; $\II^{\abss{\Xihat_i}}$ as $\ZZ_i$ (see Lemma~\ref{le:compact} for definitions); and the uncertainty set $\Xi$ remains the same
				
				\textbf{Functions:} maps in~\eqref{eq:varphi-dro} as $\varphi_i$ and $h_i$ and map~\eqref{eq:gi-dro} as $g_i$

			}
		\end{customthm}
	\end{fminipage}
\end{center}

In the above algorithm, agents need to agree beforehand upon the error tolerance $\eps$ and the regularization parameter $\rho$. The former of these can be selected independently by each agent and the algorithm still converges. The guarantee on the obtained optimizer depends on the largest of these tolerance values. The following result summarizes the convergence guarantees of the algorithm. It uses the convergence result of the previous section and the fact that if one satisfies the semi-infinite constraints in~\eqref{eq:dro-sip} with an $\eps$ error and optimizes the objective function under these constraints, then one obtains an $\eps$-optimal solution of the DRO problem~\eqref{eq:dro}.
\begin{proposition}\longthmtitle{Convergence of distributed cutting-surface ADMM for DRO}\label{pr:conv-dist}
	For Algorithm~\ref{alg:dist-dro}, there exists a finite time $K \in \integerspositive$ such that $\Xi_i^{k_1} = \Xi_i^{k_2}$ for all $i \in \until{n}$ and all $k_1, k_2 \ge K$. Furthermore, for any limit point $(X,S,V)=(\xag_i,\sag_i,\vag_i)_{i \in \until{n}}$ of the sequence $\{(X^k,S^k,V^k):=((\xag_i)^{k}, (\sag_i)^k, (\vag_i)^{k})_{i \in \until{n}}\}_{k=0}^\infty$ the following is true:
	\begin{enumerate}
		\item $(\xag_i,\sag_i) = (\xag_j,\sag_j)$ for all $i,j \in \until{n}$,
		\item denoting $v^\star$ as the collection $(\vag_i)_{i \in [N]}$ and $s^\star = \sag_i$, the inequality below holds 
		\begin{align}\label{eq:ineq-dro}
			\ones_{L}^\top v^\star + L \theta s^\star \le L \Jdro,
		\end{align}
		where $\Jdro$ is the optimal value of~\eqref{eq:dro}, and
		\item denoting $x^\star = \xag_i$, we have 
		\begin{align}\label{eq:eps-opt-dro}
			\sup_{\Qb \in \aset} \Eb_{\Qb} [f(x^\star,\xi)] \le \Jdro + \eps.
		\end{align}
	\end{enumerate} 	
\end{proposition}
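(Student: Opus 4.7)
The plan is to instantiate Proposition~\ref{pr:conv} via the correspondence laid out in Algorithm~\ref{alg:dist-dro} and Lemma~\ref{le:compact}, and then translate its three conclusions back into the language of problem~\eqref{eq:dro}. First, I would verify the hypotheses of Proposition~\ref{pr:conv} under this correspondence. The sets $\YY = \XX \times \BB_s$, $\ZZ_i = \II^{\abss{\Xihat_i}}$, and $\Xi$ are all compact, with $\YY$ and $\ZZ_i$ convex. The objective components $\varphi_i(\vag_i) = \ones_{\abss{\Xihat_i}}^\top \vag_i$ and $h_i(x,s) = \abss{\Xihat_i}\theta s$ are linear, hence convex, and are bounded on their bounded domains. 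The constraint function $g_i$ in~\eqref{eq:gi-dro} is the pointwise maximum over $\{\ell : \data{\ell} \in \Xihat_i\}$ of the maps $(x,s,\vag_i) \mapsto f(x,\xi) - (\vag_i)_\ell - s\norm{\xi - \data{\ell}}$; each is convex in $(x,s,\vag_i)$ for fixed $\xi$ (linear in $(s,\vag_i)$ and convex in $x$ by the standing assumption on $f$) and locally Lipschitz (inherited from $f$ and the Lipschitz norm term), so the same holds for the maximum.

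Applying Proposition~\ref{pr:conv} then directly yields claim (i) and the consensus statement $(\xag_i,\sag_i) = (\xag_j,\sag_j) =: (x^\star,s^\star)$ at any limit point, which is the first half of claim (ii). It also gives the $\eps$-feasibility
\begin{align}\label{eq:eps-feas-dro-plan}
  f(x^\star,\xi) - v^\star_\ell - s^\star \norm{\xi - \data{\ell}} \le \eps, \quad \forall \ell \in \until{L},\, \xi \in \Xi,
\end{align}
together with the objective-value bound $\sum_i(\varphi_i(\vag_i) + h_i(\xag_i,\sag_i)) \le J^\star$. Since $\sum_i \abss{\Xihat_i} = L$ and all agents agree on $s^\star$, the left-hand side of this bound equals $\ones_L^\top v^\star + L\theta s^\star$, and the right-hand side is $J^\star = \Jref = L\Jdro$ by the equivalence recalled in Section~\ref{sec:sip-reform}. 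This is precisely inequality~\eqref{eq:ineq-dro}.

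For claim (iii), I would translate the $\eps$-gap in~\eqref{eq:eps-feas-dro-plan} into an $\eps$-gap on the DRO value via Wasserstein duality. Shifting each coordinate of $v^\star$ by $\eps$ while leaving $(x^\star,s^\star)$ unchanged produces a triple $(x^\star,s^\star,v^\star + \eps\ones_L)$ that is exactly feasible for~\eqref{eq:dro-sip}. The strong-duality relation underlying the reformulation in Section~\ref{sec:sip-reform} (as established in~\cite{FL-SM:19}) implies that any feasible $(x,s,v)$ of~\eqref{eq:dro-sip} satisfies
\begin{align*}
  \sup_{\Qb \in \aset} \Eb_{\Qb}[f(x,\xi)] \le \tfrac{1}{L}\left(\ones_L^\top v + L\theta s\right).
\end{align*}
Evaluating this at $(x^\star,s^\star,v^\star + \eps\ones_L)$ and then applying the bound~\eqref{eq:ineq-dro} already established yields $\sup_{\Qb \in \aset} \Eb_{\Qb}[f(x^\star,\xi)] \le \tfrac{1}{L}(\ones_L^\top v^\star + L\theta s^\star) + \eps \le \Jdro + \eps$, which is~\eqref{eq:eps-opt-dro}.

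The main obstacle is really this last step: cleanly converting the $\eps$-relaxation of the SIP constraints picked up by the cutting-surface procedure into an $\eps$-relaxation of the DRO objective. Once the shift $v^\star \mapsto v^\star + \eps\ones_L$ is used to restore exact SIP feasibility, the rest is bookkeeping through Wasserstein strong duality.
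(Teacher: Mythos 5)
Your proposal is correct and follows essentially the same route as the paper: instantiate Proposition~\ref{pr:conv} via the correspondence of Algorithm~\ref{alg:dist-dro} and Lemma~\ref{le:compact}, read off consensus, $\eps$-feasibility, and the objective bound, and then convert the $\eps$-feasibility into $\eps$-optimality for~\eqref{eq:dro}. The only difference is that the paper delegates this last conversion to the argument of~\cite[Theorem 3.2]{FL-SM:19}, whereas you spell it out explicitly (the shift $v^\star \mapsto v^\star + \eps\ones_L$ followed by weak duality), which is precisely the content of that cited proof.
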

\begin{proof}
	From Proposition~\ref{pr:conv}, we deduce that (a) after a finite number of steps, no cuts are added, (b) at any limit point of the algorithm, we achieve consensus over the variables $(\xag_i,\sag_i)$, (c) for every agent $i \in \until{n}$, 
	\begin{align*}
		\max_{\xi \in \Xi} g_i(\xag_i,\sag_i,\vag_i,\xi) \le \eps,
	\end{align*}
	and finally (d) denoting $v^\star = (\vag_i)_{i \in \until{n}}$ and $s^\star = \sag_i$, the inequality~\eqref{eq:ineq-dro} holds as $\ones_L^\top v^\star + L \theta s^\star \le \Jref = L \Jdro$, where recall that $\Jref$ is the optimum value of~\eqref{eq:dro-sip}. Lastly, as a consequence of (c) and (d), and employing the arguments of the proof of~\cite[Theorem 3.2]{FL-SM:19}, we arrive at~\eqref{eq:eps-opt-dro}.	
\end{proof}
The above result implies that asymptotically, each agent arrives at a point $x$ that is $\eps$-optimal for the DRO problem~\eqref{eq:dro}. That is, the cost incurred at $x$ is no more than $\eps$ higher than the optimal value of~\eqref{eq:dro}. A natural question to ask is if one can reduce $\eps$ along the execution of the algorithm to arrive at the exact optimizer. We believe this is possible and we wish to explore it  in future. The ensuing remark demonstrates how our distributed algorithm can be used for solving distributionally robust risk minimization.

\begin{remark}\longthmtitle{Distributed distributionally robust risk minimization}
	{\rm
		Consider the following risk minimization problem
		\begin{align}\label{eq:dr-cvar}
			\inf_{x \in \XX} \sup_{\Qb \in \aset} \CVaR_{\beta}^{\Qb} [ f(x,\xi)],
		\end{align}
		where $\beta \in (0,1)$ is the risk-averseness parameter and $\CVaR$ is the conditional value-at-risk~\cite{AS-DD-AR:14}. Roughly speaking, the quantity $\CVaR_\beta^\Qb[f(x,\xi)]$ stands for the expectation of the tail of the random variable $\xi \mapsto f(x,\xi)$ that has $\beta$ mass under the distribution $\Qb$. Mathematically, we have 
		\begin{align}\label{eq:cvar}
			\CVaR_{\beta}^\Qb [f(x,\xi)] = \inf_{t \in \real} \Bigl\{ t + \frac{1}{\beta} \Eb_\Qb [f(x,\xi) - t]_+ \Bigr\},
		\end{align}
		where for $u \in \real$, we have $[u]_+ := \max \{u,0\}$.  Problem~\eqref{eq:dr-cvar} minimizes the worst-case risk of the uncertain function $\xi \mapsto f(x,\xi)$ over the ambiguity set and so, is a generalization of the DRO problem~\eqref{eq:dro}. Nonetheless, Algorithm~\ref{alg:dist} can be used to solve~\eqref{eq:dr-cvar} in a distributed manner. Specifically, assume the same network and information structure as in Section~\ref{sec:problem}. To arrive at a distributed algorithm, we reformulate~\eqref{eq:dr-cvar} and bring it into the form of~\eqref{eq:gen-sip}. For a fixed $x \in \XX$, we have 
		\begin{align*}
			\sup_{\Qb \in \aset} \! \CVaR_{\beta}^{\Qb} [ f(x,\xi)] & \!  = \!  \sup_{\Qb \in \aset} \inf_{t \in \real} \Bigl\{ t + \frac{1}{\beta} \Eb_\Qb [f(x,\xi) - t]_+ \Bigr\}
			\\
			& \!  = \!  \inf_{t \in \real} \sup_{\Qb \in \aset} \Bigl\{ t + \frac{1}{\beta} \Eb_\Qb [f(x,\xi) - t]_+ \Bigr\},
		\end{align*}
		where the last equality follows from~\cite[Lemma 3.2]{AC-ARH:21}. Using the above equality, problem~\eqref{eq:dr-cvar} is equivalent to 
		\begin{align}\label{eq:cvar-t}
			\inf_{x \in \XX, t \in \real} \sup_{\Qb \in \aset} \,  \Bigl\{ t + \frac{1}{\beta} \Eb_\Qb [ f(x,\xi) -t]_+ \Bigr\}.
		\end{align}
	For any $x \in \XX$ and $\Qb \in \aset$, the optimizer of  problem~\eqref{eq:cvar} belongs to the compact set $\TT := [\underline{f},\overline{f}] \subset \real$, where this interval is such that $f(x,\xi) \in \TT$ for all $(x,\xi) \in \XX \times \Xi$, see~\cite[Chapter 6]{AS-DD-AR:14} for details. Hence, one can restrict the domain of the optimization problem~\eqref{eq:cvar-t} to $\XX \times \TT$ without disturbing the optimizers. Subsequently, using ~\cite[Theorem 3.1]{FL-SM:19}, we reformulate~\eqref{eq:cvar-t} as 
		\begin{align*}
			\underset{(x,t,s,v)}{\min} & \quad Lt + \frac{1}{\beta} \Bigl( \ones_L^\top v + L \theta s \Bigr) 
			\\
			\st & \quad v \in \real^{L}, \quad s \in \realnonnegative, \quad x \in \XX, \quad t \in \TT, 
			\\
			& \quad [f(x,\xi) - t]_+ + \beta t - \beta v_\ell - \beta s \norm{\xi - \data{\ell}} \le 0,
			\\
			& \qquad \qquad  \qquad \qquad \qquad \qquad \forall \xi \in \Xi, \;\; \ell \in \until{L}.
		\end{align*} 
	The above problem has the structure of~\eqref{eq:gen-sip} where the global variable is $(x,t,s)$ and the local ones are the components of $v$ corresponding to the samples that each agent holds. Further, the local and global objective function for agent $i$ would be $\vag_i \mapsto \beta^{-1} \ones_{|\Xihat_i|}^\top \vag_i$ and $(x,t,s) \mapsto |\Xihat_i|(\theta \beta^{-1} s + t)$, respectively. The constraints can be figured out in a similar manner. Hence, Algorithm~\ref{alg:dist} can be adapted to solve the above problem in a distributed way. As an intermediate step, one needs to identify a compact set in which optimizers of the above problem lie. This can be done much like the way done in Lemma~\ref{le:compact} using compactness of $\XX\times\TT$.
	}
		\oprocend
\end{remark}

\section{Simulations}\label{sec:sim}
Here, we demonstrate the application of Algorithm~\ref{alg:dist-dro} to a distributionally robust linear regression problem. Consider $n=6$ agents communicating over a graph defined by an undirected ring with additional edges $\{(1,4),(2,3)\}$. Each data point $\xihat^k = (\uhat^k,\vhat^k)$ consists of an input vector $\uhat^k \in \real^4$ and a scalar output $\vhat^k$. For constructing the dataset, we assume that $\uhat^k$ is sampled from multivariate normal distribution with mean zero and covariance as the identity matrix. The output takes value $\vhat^k = (1,2,3,1)^\top\uhat + \nuhat^k$, where $\nuhat^k$ is uniformly sampled from the interval $[-1,1]$. We assume that each agent has access to $10$ samples of this form. Note that the input-output pairs have an affine relationship defined by the vector $w = (1,2,3,1,0)^\top$. That is, $\vhat^k = w^\top(\uhat^k,1)^\top + \vhat^k$. The aim for the agents is to identify a vector $x^\star$ that explains the affine relationship with desirable out-of-sample guarantees. To this end, agents wish to solve the following DRO problem
\begin{align}\label{eq:dro-sim}
	\inf_{x \in \XX} \sup_{\Qb \in \aset} \Eb_{\Qb} [ f(x,\xi)]
\end{align} 
where $x \in \real^5$, $\XX := [0,5]^5$, and $f(x,\xi) := (v - x^\top (u;1))^2$ is the quadratic loss that determines the prediction accuracy of $x$ for input-output pair $\xi = (u,v)$. The radius of the ambiguity set is $\theta = 0.01$. Recall that $L = 60$ as we have six agents and each agent has $10$ samples. For our distributed algorithm, we set the error tolerance and regularizer as $\eps = 0.01$ and $\rho =  0.05$, respectively. Note that the maximization problems solved in each iteration by each agent (line~\ref{step:cut-gen} in Algorithm~\ref{alg:dist}) is nonconvex in our case as $f$ is convex in $\xi$. Thus, we make use of the branch-and-bound algorithm proposed in~\cite[Algorithm 2]{FL-SM:19} to find an $\eps/2$ approximate solution to the problem.  Figure~\ref{fig:sim1} depicts the execution of our algorithm. As shown, the agents achieve consensus over the global variable and find the solution of the DRO problem~\eqref{eq:dro-sim} as $x^\star = (0.89, 1.84, 2.89, 0.83, 0.06)$.

\begin{figure*}
	\centering
	\begin{subfigure}[b]{0.3\linewidth}
		\centering
		\includegraphics[width=\linewidth]{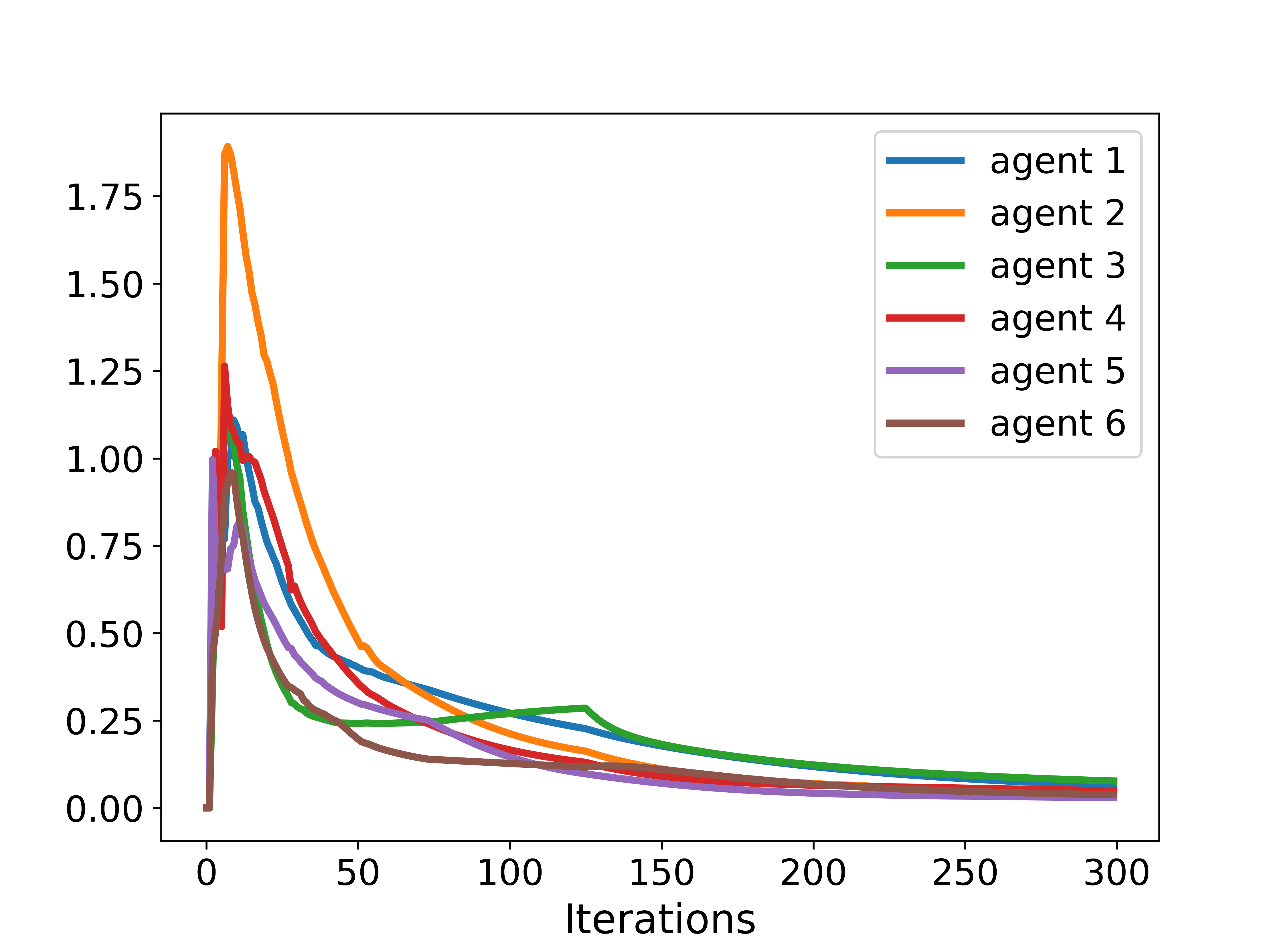}
		\caption[Network2]{{ \footnotesize $\norm{x_{\text{avg}} - x_i}$ }}%
		{{\small }}   
		\label{fig:trajs_a}
	\end{subfigure}
	\begin{subfigure}[b]{0.3\linewidth}  
		\centering 
		\includegraphics[width=\linewidth]{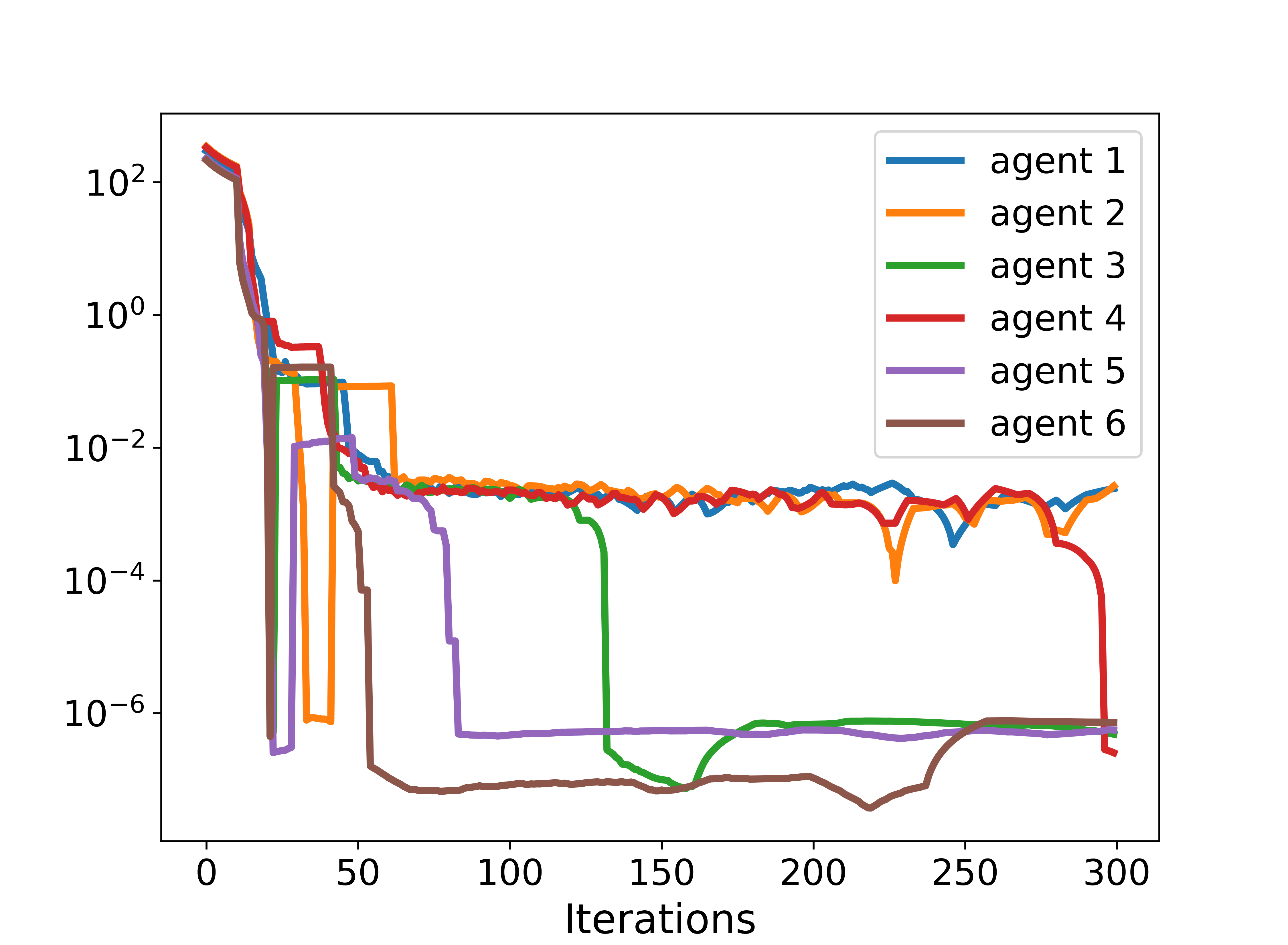}
		\caption[]{{\footnotesize $\max_{\xi \in \Xi} g_i(\xag_i,\sag_i,\vag_i,\xi)$} }%
		{{\small }}
		\label{fig:trajs_b}
	\end{subfigure}
	\begin{subfigure}[b]{0.3\linewidth}   
		\centering 
		\includegraphics[width=\linewidth]{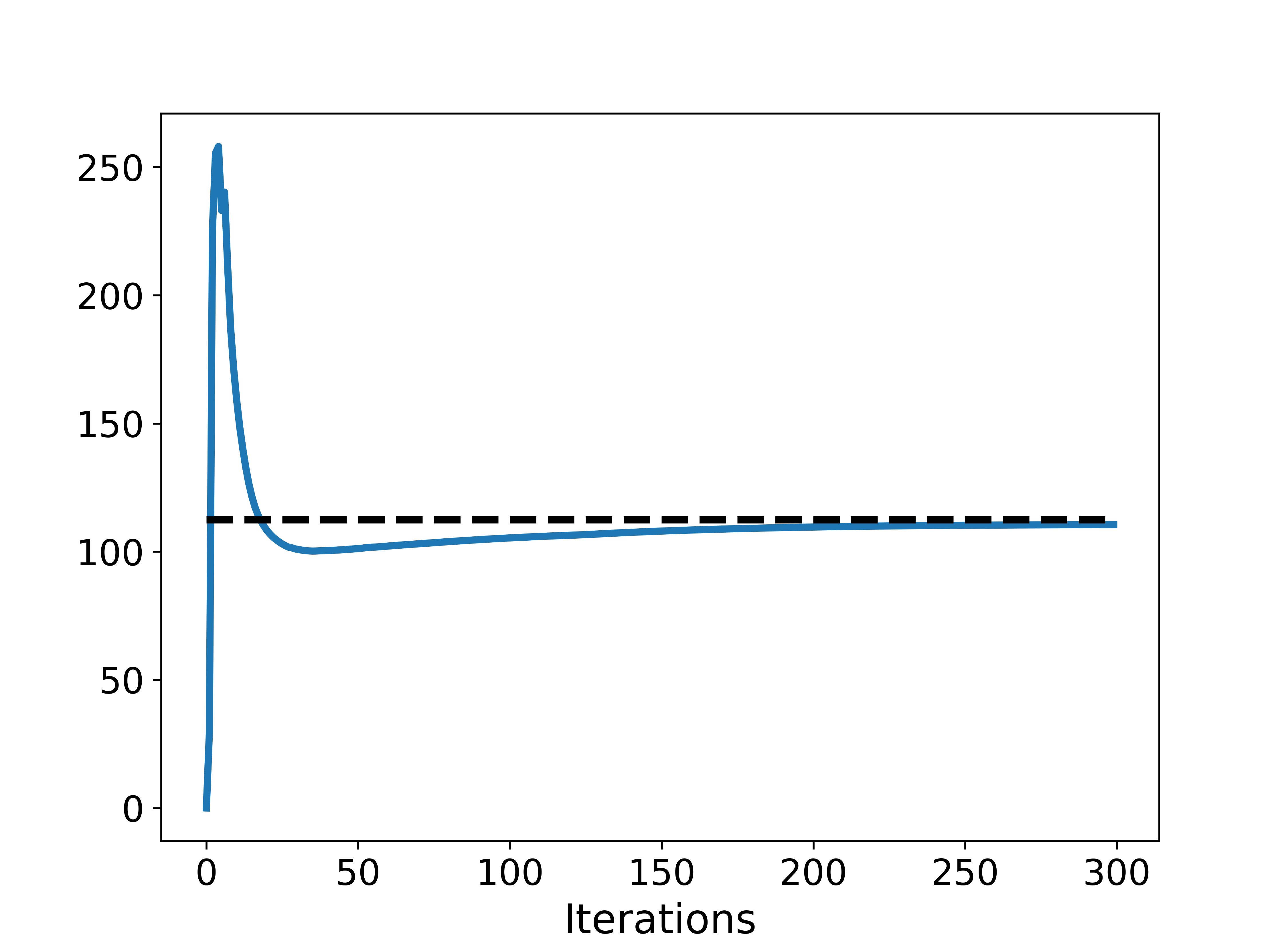}
		\caption[]{{\footnotesize $\sum_{i=1}^6 \ones_{|\Xihat_i|} \vag_i + |\Xihat_i| \theta \sag_i$}}%
		{{\small }}    
		\label{fig:trajs_c}
	\end{subfigure}
	\caption{Illustration of the execution of the distributed cutting-surface ADMM algorithm for solving the DRO problem~\eqref{eq:dro-sim} given in Section~\ref{sec:sim}. Plot (a) depicts consensus over the variable $x$. Here we see that the iterates $x_i$ maintained by each agent $i$ converge to the average value $x_{\text{avg}} = (1/6) \sum_{i=1}^6 x_i$. Plot (b) shows the constraint violation for the primal iterates $(\xag_i,\sag_i,\vag_i)$ for each agent. That is, it evaluates $\max_{\xi \in \Xi} g_i(\xag_i,\sag_i,\vag_i,\xi)$ at each iteration where $g_i$ is given in~\eqref{eq:gi-dro}. It shows that in less than hundred iterations, all agents suffer from no more than $\eps = 0.01$ constraint violation. Finally, plot (c) portrays that the value of the objective function of the reformulated DRO problem~\eqref{eq:dro-sip} written as the summation over agents $\sum_{i=1}^6 \ones_{|\Xihat_i|} \vag_i + |\Xihat_i| \theta \sag_i$ converges to a value less than $\Jref = L \Jdro$ (shown by dark dashed line), as asserted in~\eqref{eq:ineq-dro}.  }
	\vspace*{-2ex}
	 \label{fig:sim1}
\end{figure*}

\section{Conclusions}
We have designed and analyzed a distributed algorithm that solves the data-driven DRO problem to a pre-specified arbitrary accuracy. Our algorithm combines the features of cutting-surface algorithm and distributed ADMM. There are several open future research directions. First, we aim to analyze the convergence of our algorithm to the optimum when the accuracy is improved along the iterations. Next, we plan to design finite-memory algorithms that can solve the DRO problem to a certain approximation even when the sample size is large. Finally, we intend to explore primal-dual distributed algorithms to handle semi-infinite constraints.

\section*{Appendix}
\renewcommand{\theequation}{A.\arabic{equation}}
\renewcommand{\thetheorem}{A.\arabic{theorem}}
The next result aids us in proving Proposition~\ref{pr:conv}. The proof is inspired by that of~\cite[Theorem 7.2]{RH-KOK-93} where cuts were added whenever the semi-infinite constraint is violated (without considering the $\eps$ error tolerance). Our result shows that as long as the semi-infinite constraints are approximated iteratively with increasing number of cuts and the primal variable, at any iteration, satisfies the constraint for all cuts added in previous iterations, then after a finite time, primal variable always satisfies the semi-infinite constraint approximately and no more cuts are added.

\begin{lemma}\longthmtitle{Finite termination of cut addition process}\label{le:finite-conv}
	Consider the problem $\max_{\xi \in \Xi} g(x,\xi)$, where $\map{g}{\real^{d} \times \real^{m}}{\real}$ is locally Lipschitz. Assume that $g$ is convex in the first argument $x$ for any fixed $\xi \in \Xi$ and $\Xi \subset \real^m$ is a compact set. Let $\eps > 0$, $\XX \subset \real^d$ be a compact set, and consider sequences $\setr{x^k}_{k=0}^\infty \subset \XX$ and $\setr{\Xi^k}_{k=0}^\infty$ constructed iteratively as: $x^0 \in \XX$, $\Xi^0 = \emptyset$, and for all $k \in \integerspositive$,
	\begin{enumerate}
		\item \label{cond:1} (primal feasibility): find $x^{k+1} \in \XX$ satisfying
			\begin{align}\label{eq:g-less-zero}
				g(x^{k+1},\bar{\xi}) \le 0, \qquad \forall \, \bar{\xi} \in \Xi^k;
			\end{align}
		\item \label{cond:2} 
			(cut addition): set $\Xi^{k+1} = \Xi^k \cup \setr{\xi^{k+1}}$, where the point $\xi^{k+1} \in \Xi$ satisfies 
			\begin{align}\label{eq:approx-opt}
				g(x^{k+1},\xi^{k+1}) \ge \max_{\xi \in \Xi} g(x^{k+1},\xi) - \frac{\eps}{2},
			\end{align}
			and $g(x^{k+1},\xi^{k+1}) > \eps/2$. If no such point $\xi^{k+1}$ exists, then $\Xi^{k+1} = \Xi^k$.
	\end{enumerate}
	Then, there exists a $K \in \integerspositive$ such that $\Xi^{k_1} = \Xi^{k_2}$ for all $k_1, k_2 \ge K$ and 
	\begin{align}\label{eq:K-cond}
		\max_{\xi \in \Xi} g(x^k,\xi) \le \eps, \qquad \forall k \ge K.
	\end{align}
\end{lemma}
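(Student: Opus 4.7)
The plan is to argue by contradiction that only finitely many cuts are ever added, and then upgrade that termination property into the quantitative bound~\eqref{eq:K-cond}. The main tools are compactness of $\XX$ and $\Xi$, continuity of $g$ (which follows from the local Lipschitz hypothesis), and a standard Bolzano--Weierstrass subsequence argument.

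First I would suppose, for the sake of contradiction, that cuts are added at an infinite sequence of iterations $k_1 < k_2 < \cdots$. Along this sequence consider the pairs $(x^{k_j+1},\xi^{k_j+1}) \in \XX \times \Xi$. Since $\XX \times \Xi$ is compact, I can pass to a common convergent subsequence (still indexed by $j$) with $x^{k_j+1} \to x^\star$ and $\xi^{k_j+1} \to \xi^\star$. I would then extract two incompatible inequalities from this subsequence. Because $\xi^{k_j+1}$ was accepted as a cut, hypothesis~\ref{cond:2} gives $g(x^{k_j+1},\xi^{k_j+1}) > \eps/2$, and passing to the limit using continuity of $g$ yields
\begin{align*}
 g(x^\star,\xi^\star) \ge \eps/2.
\end{align*}
On the other hand, for any fixed $j$ and any later $m > j$ we have $\xi^{k_j+1} \in \Xi^{k_m}$, so primal feasibility~\eqref{eq:g-less-zero} gives $g(x^{k_m+1},\xi^{k_j+1}) \le 0$. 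Letting $m \to \infty$ along the subsequence (with $j$ held fixed) yields $g(x^\star,\xi^{k_j+1}) \le 0$, and then letting $j \to \infty$ and invoking continuity again yields $g(x^\star,\xi^\star) \le 0$. This contradicts $g(x^\star,\xi^\star) \ge \eps/2 > 0$.

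From the contradiction I conclude that only finitely many cuts can be added, so there exists $K \in \integerspositive$ with $\Xi^{k_1} = \Xi^{k_2}$ for all $k_1,k_2 \ge K$. To obtain~\eqref{eq:K-cond}, pick any $k \ge K$: since no cut is appended after $x^{k+1}$ is computed, no $\xi \in \Xi$ satisfies both the $\eps/2$-approximation condition and $g(x^{k+1},\xi) > \eps/2$ simultaneously. Compactness of $\Xi$ and continuity of $g(x^{k+1},\cdot)$ guarantee that an exact maximizer $\xi^\sharp \in \argmax_{\xi \in \Xi} g(x^{k+1},\xi)$ exists, and $\xi^\sharp$ is trivially an $\eps/2$-approximate maximizer. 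Therefore the only way it can fail to be a valid cut is that $g(x^{k+1},\xi^\sharp) \le \eps/2$, which is precisely $\max_{\xi \in \Xi} g(x^{k+1},\xi) \le \eps/2 \le \eps$. Shifting the index $K$ by one gives the claimed bound.

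The main obstacle I anticipate is not any single step but rather the careful iterated limit in the contradiction argument: one must fix $j$ and first send $m \to \infty$ to exploit primal feasibility, and only then send $j \to \infty$ to exploit the cut-generation inequality, relying on joint continuity of $g$ on the compact set $\XX \times \Xi$. Once this diagonal-style passage is handled cleanly, the rest of the proof is essentially bookkeeping. Note that convexity of $g$ in its first argument, though assumed in the lemma's hypothesis, is not actually invoked here; it is needed elsewhere (in Algorithm~\ref{alg:dist}) to ensure that the primal feasibility step~\ref{cond:1} can be executed as a convex program.
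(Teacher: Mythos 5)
Your proof is correct, but it reaches the contradiction by a different compactness mechanism than the paper. The paper also argues by contradiction from the same two inequalities ($g(x^{k_n},\xi^{k_n})>\eps/2$ from cut acceptance and $g(x^{k_n},\xi^{k_q})\le 0$ from primal feasibility), but instead of extracting a convergent subsequence it uses the Lipschitz constant $L$ of $g$ on $\XX\times\Xi$ to show that any two accepted cuts must satisfy $\norm{\xi^{k_n}-\xi^{k_q}}\ge \eps/(4L)$, and then invokes the fact that a compact set cannot contain infinitely many points that are pairwise separated by a fixed positive distance. Your Bolzano--Weierstrass route with the iterated limit (first $m\to\infty$ to get $g(x^\star,\xi^{k_j+1})\le 0$, then $j\to\infty$ to get $g(x^\star,\xi^\star)\le 0$ against $g(x^\star,\xi^\star)\ge\eps/2$) is handled correctly and is slightly more economical in its hypotheses: it needs only joint continuity of $g$, not a Lipschitz modulus. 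What the paper's packing argument buys in exchange is quantitative information: the $\eps/(4L)$ separation bounds the total number of cuts by a covering number of $\Xi$, which a purely topological subsequence argument cannot provide. Your closing step for~\eqref{eq:K-cond} via an exact maximizer $\xi^\sharp$ (which exists by compactness and continuity and is trivially an $\eps/2$-approximate maximizer) matches the paper's contrapositive argument in substance and even yields the marginally sharper bound $\eps/2$; your remark that convexity in $x$ is never used in this lemma is also accurate.
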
 
\begin{proof}
	We first show the existence of $K \in \integerspositive$ such that $\Xi^{k_1} = \Xi^{k_2}$ for all $k_1, k_2 \ge K$. To this end, assume the contrary. This implies the existence of a sequence $\setr{(x^{k_n}, \xi^{k_n})}_{n=1}^\infty$ such that $\xi^{k_n} \in \Xi^{k_n} \setminus \Xi^{k_n - 1}$ for all $n$. That is, at iteration $k_n - 1$, the cut $\xi^{k_n}$ is added to the set $\Xi^{k_n - 1}$. By conditions~\ref{cond:1} and~\ref{cond:2} of the hypotheses, we get $g(x^{k_n}, \xi^{k_n}) > \eps/2$ and $g(x^{k_n},\xi^{k_q}) \le 0$ for all $q \in \until{n-1}$. That is, 
		\begin{align}\label{eq:g-diff-bound}
			g(x^{k_n},\xi^{k_n}) - g(x^{k_n},\xi^{k_q}) > \frac{\eps}{2}, \quad \forall q \in [n-1].
		\end{align}
	By hypothesis, $g$ is Lipschitz with some constant $L >0$ over the compact set $\XX \times \Xi$. This property and the condition~\eqref{eq:g-diff-bound} imply that $\norm{\xi^{k_n} - \xi^{k_q}} \ge \frac{\eps}{4L}$ for all $q \in \until{n-1}$. Indeed, otherwise, if there exists some $\xi^{k_q}$ such that $\norm{\xi^{k_n} - \xi^{k_q}} \le \frac{\eps}{4 L}$, then we contradict~\eqref{eq:g-diff-bound} as 
		\begin{align*}
			\abs{ g(x^{k_n}, \xi^{k_n}) - g(x^{k_n},\xi^{k_q}) } \le L \norm{\xi^{k_{n}} - \xi^{k_q}} \le \frac{\eps}{4}.
		\end{align*}
		Summarizing the above reasoning, we have that the infinite set of points $\{\xi^{k_n}\}_{n=1}^\infty$ are at least $\frac{\eps}{4 L}$ distant from each other. This is a contradiction as $\Xi$ is a compact set, see~\cite[Theorem 3.1 and Corollary 3.4]{VT:11}. Thus, we reach the conclusion that after the $K$-th iteration no cuts are added. Finally, this implies~\eqref{eq:K-cond} because otherwise if $x^k$ for some $k > K$ violates $\max_{\xi \in \Xi} g(x^k,\xi) \le \eps$, then one can find $\xi^{k}$ such that $g(x^k,\xi^{k}) \ge \max_{\xi \in \Xi} g(x^k,\xi) - \eps/2$ and $g(x^k,\xi^k) > \eps/2$. This would mean $\xi^k$ is added to the cut set which again leads to a contradiction. This completes the proof.
\end{proof}

Notice that in the above result, at each iteration, the primal variable $x^{k+1}$ merely needs to be feasible for the added cuts in $\Xi^k$; there is no requirement of it being optimal with respect to some other function. This is different from the cutting-surface algorithm~\cite[Algorithm 1]{FL-SM:19}, where each $x^k$ of the primal sequence is an optimizer. Additionally, we highlight that at each iteration, we can add more than one cut in the above result and the statement still holds as long as there is one cut that satisfies the conditions in~\ref{cond:2}. 

\bibliographystyle{ieeetr}

\end{document}